\documentclass[11pt]{amsart}

\usepackage[dvipsnames]{xcolor}
\usepackage{amssymb,verbatim}
\usepackage{amsmath,amsfonts,enumitem,bm}
\usepackage[mathscr]{euscript} 
\usepackage{amsthm}
\usepackage{url}
\usepackage{graphicx} 
\usepackage{enumitem} 
\usepackage{hyperref} 
\hypersetup{colorlinks} 
\usepackage{bbm} 
\usepackage{bm} 
\usepackage{mathrsfs} 
\usepackage{cancel} 
\usepackage{ytableau} 
\usepackage{mathtools}
\usepackage{lscape} 
\usepackage{colortbl} 
\usepackage{faktor} 
\usepackage{algorithm}
\usepackage{algorithmic}

\usepackage[colorinlistoftodos]{todonotes}

\RequirePackage{cleveref}
\usepackage{hypcap}
\hypersetup{colorlinks=true, citecolor=darkblue, linkcolor=darkblue}
\definecolor{darkblue}{rgb}{0.0,0,0.7}
\newcommand{\darkblue}{\color{darkblue}}

\definecolor{darkred}{rgb}{0.68,0,0}

\definecolor{darkgreen}{rgb}{0,.38,0}

\newcommand{\defn}[1]{\emph{\darkblue #1}}

\setlist[enumerate]{
	label=\textnormal{({\roman*})},
	ref={\roman*}}

\makeatletter
\def\th@plain{%
	\thm@notefont{}
	\itshape 
}
\def\th@definition{%
	\thm@notefont{}
	\normalfont 
}
\makeatother

\newtheorem{thm}{Theorem}[section]

\newtheorem*{claim*}{Claim}

\newtheorem{prop}[thm]{Proposition}
\newtheorem{conj}[thm]{Conjecture}
\newtheorem{conv}[thm]{Convention}

\theoremstyle{definition}

\newtheorem{rem}[thm]{Remark}

\numberwithin{figure}{section}
\numberwithin{equation}{section}

\def\nn{\mathbb N}

\def\la{\lambda}
\def\ga{\gamma}

\def\de{\delta}

\def\al{\alpha}
\def\be{\beta}

\def\<{\langle}
\def\>{\rangle}

\def\rM{ {\text {\rm M} } }

\def\rD{{\text {\rm D} } }
\def\rC{{\text {\rm C} } }

\def\0{{\mathbf 0}}

\def\.{\hskip.06cm}
\def\ts{\hskip.03cm}

\def\La{\Lambda}

\newcommand{\SYT}{\operatorname{{\rm SYT}}}

\def\.{\hskip.06cm}
\def\ts{\hskip.03cm}

\usepackage{indentfirst}
\DeclareTextSymbolDefault{\ae}{T1}

\begin{document}

\title[On the largest Littlewood--Richardson coefficient]{On the largest Littlewood--Richardson coefficient}

 \author[Igor Pak \. \and \. Daniel Soskin]{Igor Pak$^\star$  \. \and \.  Daniel Soskin$^\star$}

\makeatletter

\thanks{\today}

\thanks{\thinspace ${\hspace{-.45ex}}^\star$Department of Mathematics,
UCLA, Los Angeles, CA 90095, USA;
\texttt{\{pak,dsoskin\}@math.ucla.edu}}

\begin{abstract}
We study partitions which attain the largest Littlewood--Richardson coefficient.
More precisely, we prove that the largest \ts $c^\la_{\mu\nu}$ \ts
is attained at partitions such that \ts $\mu\subseteq \nu$ \ts and \ts $\nu/\mu$ \ts
is a disjoint union of squares.  We conjecture that for \ts $n\ge 16$,
\emph{all} \ts largest \ts $c^\la_{\mu\nu}$ \ts must satisfy this property.
We confirm this conjecture numerically, for  \ts $16 \le n\le 45$.
\end{abstract}

\maketitle

\section{Introduction}\label{s:def}

According to Metropolis \cite[p.~463]{Met80}, it was Paul Stein's idea to use
the MANIAC computer to investigate problems in ``discrete mathematics with
emphasis on combinational theory.''  In Bivins et al.~\cite{BMSW54}, the first publication
that resulted from this investigation, the authors computed character tables of
\ts $S_{15}$ \ts and \ts $S_{16}$, which took 5 and 12 hours, respectively.
In what can only be viewed as gross understatement, the authors commented:
\begin{quote}
``The experience was quite encouraging,
and suggests that it would be profitable
to apply electronic computer techniques
to a large class of quite complicated
problems in algebra and group theory,'' \cite[p.~212]{BMSW54}.
\end{quote}

The story of this paper started with a footnote~$9$ at the end of \cite{BMSW54},
where the authors asked for the largest dimension \ts $\rD(n)$ \ts of the
irreducible $S_n$ module:
$$
\rD(n) \ := \ \max_{\la\vdash n} \. f^\la \quad \text{where} \quad f^\la \, := \, \chi^\la(1) \, = \, |\SYT(\la)|,
$$
where \ts $\SYT(\la)$ \ts is the set of \emph{standard Young tableaux} \ts of shape~$\la$, see e.g.\ \cite{Sag01,Sta99}.

Soon after, Com\'et \cite{Com60} used the \emph{hook-length formula}
to compute \ts $\rD(n)$ \ts for all \ts $n\le 30$.  Baer and Brock \cite{BB68}
extended the computation to \ts $n\le 36$ \ts and asked for the shape of a
partition \ts $\la\vdash n$ \ts on which the maximum is achieved.
This led to a long series of papers computing
the sequence further, see \cite{McK76,KP92,VP10,DS23}.  We refer to
\cite[\href{https://oeis.org/A003040}{A003040}]{OEIS} for the updated
list of values \ts $\{\rD(n), n\le 153\}$.

A theoretical investigation also followed, beginning with McKay's lower bound \cite{McK76},
and the Vershik--Kerov upper bound \cite{VK85}.  Famously, in connection to the study of
\emph{longest increasing subsequences} \ts in random permutations, the asymptotic limit
shape (\emph{VKLS shape}), was determined by Logan--Shepp \cite{LS77} and Vershik--Kerov \cite{VK81}.
Most recently, Aggarwal and Elboim \cite{AE26} proved the \emph{Vershik--Kerov--Pass conjecture}
\ts that
$$
\rD(n) \ = \ \sqrt{n!} \.\cdot\. c^{-\sqrt{n}(1+o(1))} \quad \text{for some} \ \ \. c\. > \. 1.
$$
We refer to \cite{Rom15} for an extensive discussion of the problem and connections to
other areas.

Motivated by this problem, Richard Stanley \cite[Exc.~82(c)]{Sta23}\footnote{This exercise
was added in 2017.} asked the same questions
about the largest \emph{Littlewood--Richardson {\rm (LR)} coefficient}~:
\begin{equation}\label{eq:C}
\rC(n) \ := \ \max_{\la\vdash n} \. \max_{0\le k \le n/2} \. \max_{\mu \vdash k} \. \max_{\nu \vdash n-k} \. c^\la_{\mu\nu}\.,
\end{equation}
see \cite[\href{https://oeis.org/A387851}{A387851}]{OEIS}.
Stanley showed that the maximum is achieved at \ts $k \sim \frac{n}{2}\ts$, when
$$
\rC(n) \ = \ 2^{n/2\ts-\ts O(\sqrt{n})}\,,
$$
and asked about the asymptotic limit shape.  This was resolved by Panova, Yeliussizov
and the first author \cite{PPY19}, who showed that all three partitions have VKLS shape.
The authors also computed the values \ts $\{\rC(n), n \le 23\}$ \ts and
made many additional observations.

In \cite[$\S$4.7]{PPY19}, the authors noted that the maximum in
\eqref{eq:C} is typically achieved on \emph{nested partitions}
\ts $\mu \subseteq \nu \subseteq \la$.  They conjectured that this always holds
\cite[Conj.~4.21]{PPY19}, and observed that the \emph{Lam--Postnikov--Pylyavskyy {\rm (LPP)} inequality}
\ts implies that the nested property holds for all~$n$ on at \emph{least one} \ts largest
LR coefficient.

We computed the values \ts $\{\rC(n), n \le 45\}$, see the Appendix.
Our computations show that the PPY conjecture above fails already for $n=11$,
with \ts $\la=(5,3,2,1)$, $\mu=(4,1)$, $\nu=(3,2,1)$, but \ts $c^\la_{\mu\nu}=\rC(11)=3$.
However, our results suggest that a stronger claim holds for sufficiently large~$n$:

\smallskip

\begin{conj}\label{conj:main}
For all \ts $n\ge 16$, \underline{\em every} maximum in \eqref{eq:C} is achieved on
partitions \ts $\mu \subseteq \nu \subseteq \la$, such that skew shape
\ts $\nu/\mu$ \ts is a disjoint union of squares.  Moreover, for \ts $n\ge 28$, this maximum
is unique up to the transposition\footnote{In our notation, the transposition
can only give the largest LR coefficients, when \ts $\mu$ \ts and \ts $\nu$ \ts have the same size: \ts
$|\mu|=|\nu|=n/2$.}
and conjugation of partitions: \.
$c^\la_{\mu\nu}  \, = \, c^\la_{\nu\mu}  \, = \,  c^{\la'}_{\mu'\nu'}\, = \,  c^{\la'}_{\nu'\mu'}$\..
\end{conj}

\smallskip

This says that \ts $\mu$ \ts can be obtained from \ts $\nu$ \ts by removing some of its corners.
In this paper we present both theoretical and computational evidence in favor of the
conjecture.  Our main result in the following weak version of the conjecture, which, by
the second part of the conjecture, is equivalent to the first part:

\smallskip

\begin{thm}\label{t:main}
For all \ts $n\ge 1$, \underline{\em at least one} maximum in \eqref{eq:C} is achieved on
partitions \ts $\mu \subseteq \nu \subseteq \la$, such that skew shape
\ts $\nu/\mu$ \ts is a disjoint union of squares.
\end{thm}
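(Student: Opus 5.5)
Starting from any maximizer $c^\la_{\mu\nu}=\rC(n)$, I will modify $(\la,\mu,\nu)$ step by step. At every step the coefficient will not decrease and $n$ will stay the same, until the required shape is reached. The first step uses the Lam--Postnikov--Pylyavskyy inequality. In the Schur-positive form it says that
$$s_{\mu}\ts s_{\nu} \ \le_s \ s_{\mu\cup\nu}\ts s_{\mu\cap\nu}\.,$$
and more generally
$$s_{\mu}\ts s_{\nu} \ \le_s \ s_{\mathrm{sort}_1(\mu,\nu)}\ts s_{\mathrm{sort}_2(\mu,\nu)}\.,$$
where the two new partitions are obtained by sorting the parts of $\mu,\nu$ rowwise (taking rowwise max and min). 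Comparing coefficients of $s_\la$ gives
$$c^\la_{\mu\nu} \ \le \ c^\la_{\mu\cap\nu,\ \mu\cup\nu}\..$$
So we may assume $\mu\subseteq\nu$, and also $|\mu|\le n/2$ after swapping, since $c^\la_{\mu\nu}=c^\la_{\nu\mu}$. Containment $\nu\subseteq\la$ is automatic whenever $c^\la_{\mu\nu}>0$.

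Next I want to reduce the skew shape $\nu/\mu$. I use the interpretation of $c^\la_{\mu\nu}$ for $\mu\subseteq\nu$: it counts LR fillings of $\la/\nu$ with content $\mu$. By symmetry of LR coefficients it can equally be written as $c^\la_{\nu\mu}$, or through skew Schur functions as
$$c^\la_{\mu\nu} \ = \ \langle s_{\la/\nu},\, s_\mu\rangle\..$$
The key idea is a second application of an LPP-type (or Fomin--Fulton--Li--Poon / Okounkov-type) cell-transfer inequality. The aim is to move cells between $\mu$ and $\nu$ while keeping $|\mu|+|\nu|$ fixed, so that $\mu$ grows towards $\nu$. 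Concretely, suppose $\nu/\mu$ contains a row of length $\ge 2$ or a column of length $\ge 2$. I would replace the pair by $\mu^+=\mu+\square$ and $\nu^-=\nu-\square$, chosen so that the pair $(\mu^+,\nu^-)$ is the rowwise/columnwise sorted partner of $(\mu,\nu)$ in the FFLP sense. The FFLP conjecture, proved by Lam--Postnikov--Pylyavskyy, concerns the pair $\bigl((\mu+\nu)/2\bigr)$ rounded up and down. It states
$$s_\mu\ts s_\nu \ \le_s \ s_{\lceil(\mu+\nu)/2\rceil}\ts s_{\lfloor(\mu+\nu)/2\rfloor}\.,$$
where the rounding is arranged so that both sides are partitions. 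For nested $\mu\subseteq\nu$ this averaging brings the two partitions to differ by at most one cell in each row. Applying the same inequality to the conjugates, using $\omega$ and $c^{\la'}_{\mu'\nu'}=c^\la_{\mu\nu}$, makes them differ by at most one cell in each column. So I will alternate the row-averaging and column-averaging operations. Each operation weakly increases the coefficient, preserves nestedness, and strictly decreases a potential such as $\sum_i(\nu_i-\mu_i)^2$ unless already fixed. The process therefore terminates at a pair where $\nu/\mu$ has at most one cell in every row and every column. Such a skew shape, being the difference of two partitions, is exactly a set of removable corners of $\nu$, that is, a disjoint union of $1\times1$ squares.

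The main obstacle is the averaging step. I must check three things. First, the rounded averages $\lceil(\mu+\nu)/2\rceil$ and $\lfloor(\mu+\nu)/2\rfloor$ are partitions, with the rounding distributed so that the sizes stay integral and the total is $n$; the LPP theorem provides this in its precise formulation. Second, row-averaging does not destroy column-balancedness in a way that causes cycling, which is why I use a strictly decreasing integer potential. Third, the size constraint $|\mu|\le n/2$ used in \eqref{eq:C} is restored by swapping. If averaging does not strictly decrease the potential in some degenerate case, I would instead handle a row of $\nu/\mu$ of length $\ge2$ directly. In that case I would exhibit an injection of LR fillings via a single-cell move, which gives $c^\la_{\mu\nu}\le c^\la_{\mu+\square,\nu-\square}$.
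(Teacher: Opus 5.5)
Your proposal is correct and is essentially the paper's proof. You use LPP to make the pair nested, then apply the averaging inequality you display to rows and, via conjugation with $\la$ held fixed, to columns; that inequality is Okounkov's inequality proved by Lam--Postnikov--Pylyavskyy, not the FFLP conjecture. The paper observes that one row step followed by one column step already suffices, because averaging a nested pair in one direction never increases the differences in the other direction. So your alternating iteration stops after these two steps; if you want a termination argument anyway, $|\nu|-|\mu|$ is the cleanest strictly decreasing potential. The fallback single-cell injection $c^\la_{\mu\nu}\le c^\la_{\mu+\square,\nu-\square}$, which you do not justify, is never needed.
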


\smallskip

The new part here is the second condition; without it, this is proved in \cite[Cor.~4.19]{PPY19}.
For example, for
\ts $n=40$, we have \ts $\rC(40) = 1484=c^\la_{\mu\nu}$, where \ts $\la=(9,7,6,5,4,3,2,2,1,1)$,
\ts $\mu = (6,4,3,2,2,1)$, $\nu = (6,5,4,3,2,1,1)$.  Here \ts $\nu/\mu$ \ts is a disjoint
union of $4$ squares, and this is unique largest LR coefficient up to conjugation.

\smallskip
{\small
\begin{rem}
In the example above, we also have \ts $f^\la= \rD(40)\approx 5.9\cdot 10^{22}$ \ts is the
largest dimension as above.
This is too unusual to be a mere coincidence, given that there are exactly two partitions: $\la$
and $\la'$ with the largest dimension~$f^\la$, out of $p(40)=37338$ integer partitions of~$n=40$.  The same
phenomenon holds for a few other values of~$n$. Unfortunately, our dataset is too small
to make predictions whether this happens infinitely often.
The phenomenon is partly explained by \cite[Thm~1.7]{PPY19}, which says that for all maximal
LR coefficients, we must have \ts $f^\la > \sqrt{n!} \. c^{-n}$ \ts for some explicit $c>1$.
\end{rem}}

\smallskip

\section{Schur positive inequalities}\label{s:results}

For standard definitions and notation in algebraic combinatorics and symmetric
functions, see \cite{Sag01,Sta99}.
Recall that the \defn{LR coefficients} \ts are defined
structure constants in the ring $\La$ of symmetric functions:
$$
s_\mu \cdot s_\nu \ = \ \sum_{\la} \. c^\la_{\mu\nu} \. s_\nu\.,
$$
and note that \ts $c^\la_{\mu\nu}=0$ \ts unless \ts $|\la|=|\mu|+|\nu|$.
Famously, LR coefficients \ts $c^\la_{\mu\nu}\in \nn$ \ts have a combinatorial
interpretation which we will not need in this paper.

For two symmetric functions \ts $f,g\in \La$, we write \ts $f\geqslant_s g$ \ts
if \ts $(f-g)$ \ts is \defn{Schur positive}, i.e.\ a linear combination of
Schur functions with nonnegative coefficients.  Recall the \defn{LPP inequality}:
$$
s_\mu \. \cdot \. s_\nu \ \leqslant_s \ s_{\mu \ts \cap \ts \nu} \. \cdot\. s_{\mu \ts\cup\ts \nu}\,,
$$
where \. $\mu \cup \nu$ \. and \. $\mu \cap \nu$ \. are partitions defined by
the union and intersection, respectively, of the corresponding Young diagrams.
This inequality was introduced \cite{LPP07}, and studied extensively in recent years.
See e.g.\ \cite{CCPS26}, for an advanced generalization. In terms of
LR coefficients, the LPP inequality can be restated as
\begin{equation}\label{eq:LPP-LR}
c^\la_{\mu\nu} \, \le \, c^\la_{\mu\cap\nu,\mu\cup\nu} \quad \text{for all} \quad \la,\mu,\nu\ts.
\end{equation}

To prove Theorem~\ref{t:main}, we use the approach in \cite{PPY19},
but this time we also employ the \defn{Okounkov inequality}~$:$
$$
s_{\mu} \. \cdot \. s_{\nu} \ \leqslant_s \ s_{\lfloor(\mu+\nu)/2\rfloor} \. \cdot \. s_{\lceil(\mu+\nu)/2\rceil}\,,
$$
where \. $\lfloor(\al_1,\al_2,\ldots)\rfloor := (\lfloor\al_1\rfloor, \lfloor\al_2\rfloor,\ldots)$
 \. and
\. $\lceil(\al_1,\al_2,\ldots)\rceil := (\lceil\al_1\rceil, \lceil\al_2\rceil,\ldots)$.
A restricted version of this inequality was conjectured by Okounkov \cite{Oko97}.
In full generality, the result was obtained in \cite[Thm~11]{LPP07}.  In terms of
LR coefficients, the Okounkov inequality can be restated as
\begin{equation}\label{eq:Oko-LR}
c^\la_{\mu\nu} \, \le \, c^\la_{\lfloor(\mu+\nu)/2\rfloor,\lceil(\mu+\nu)/2\rceil} \quad \text{for all} \quad \la,\mu,\nu\ts.
\end{equation}

\smallskip
\begin{proof}[Proof of Theorem~\ref{t:main}]
Let \ts $c^\la_{\mu\nu} = \rC(n)$ \ts be a maximal LR coefficient.  By \eqref{eq:LPP-LR},
we also have \. $c^\la_{\mu\cap\nu,\mu\cup\nu} = \rC(n)$.  In other words, the first
condition in the theorem holds for partitions \. $\mu\cap\nu \subseteq \mu\cup\nu \subseteq \la$.

For the second condition, by \eqref{eq:Oko-LR} applied to \. $c^\la_{\al\be}\ts$, where \ts $\al:=\mu\cap\nu$, \ts
$\be :=\mu\cup\nu$, we similarly have: \ts $c^\la_{\lfloor(\al+\be)/2\rfloor,\lceil(\al+\be)/2\rceil}=\rC(n)$.
Note that partitions \ts $\ga:=\lfloor(\al+\be)/2\rfloor$, \ts $\de:= \lceil(\al+\be)/2\rceil$, satisfy
\ts $\ga_i\le \de_i\le \ga_i +1$ \ts for all~$i$.  In other words, the skew shape \ts $\de/\ga$ \ts
is a disjoint union of columns.

By the conjugation symmetry as in the conjecture, we have \ts $c^{\la'}_{\ga'\de'}=\rC(n)$.  By \eqref{eq:Oko-LR}, we also have
\ts $c^\la_{\lfloor(\ga'+\de')/2\rfloor,\lceil(\ga'+\de')/2\rceil}=\rC(n)$.  This gives the desired
partitions \ts $\tau := \lfloor(\ga'+\de')/2\rfloor$, \ts $\pi:=\lceil(\ga'+\de')/2\rceil$, such that
\ts $\tau \subseteq\pi \subseteq\la$ \ts and \ts $\pi/\tau$ \ts is a disjoint union of squares.
\end{proof}

\smallskip

\section{Experimental results}\label{s:exp}

Following \cite{PPY19}, let
\begin{equation}\label{eq:Ck}
\rC(n,k) \ := \ \max_{\la\vdash n} \. \max_{\mu \vdash k} \. \max_{\nu \vdash n-k} \. c^\la_{\mu\nu}\..
\end{equation}
Note that \ts $\rC(n,k)=\rC(n,n-k)$, and
$$
\rC(n) \, = \, \max_{0\le k \le n/2} \. \rC(n,k)\..
$$
It was observed in \cite[$\S$4.8]{PPY19}, that
\ts $\rC(n,k)\le \rC(n+1,k)$, and we also have \defn{stability property} \. $\rC(n,k)=\rD(n)$ \ts for \ts $n\ge \binom{k+1}{2}$.
A table of \ts $\{\rC(n,k)\}$ \ts for \ts $1\le n \le 23$, is given in \cite[App.]{PPY19}

In the Appendix, we present our extensive computations giving new values of \ts $\{\rC(n,k)\}$ \ts
for \ts $24\le n \le 45$.  Our computations confirm the previously computed values, and satisfy
stability, e.g.\ $\rC(45,k) = \rD(k)$ \ts for all \ts $0\le k \le 9$, given that \ts $45=\binom{10}{2}$.
We also computed triples of partitions which attained the maximal value \ts $\rC(n)$, for all \ts $n\le 45$,
and confirmed that Conjecture~\ref{conj:main} hold in these cases, see a discussion below.

The runtimes are summarized in the following table.  For $n\le 40$, the computation was done
on a 4-core laptop.  For $41\le n \le 45$, the computation was done on 32 cores of the
UCLA {\tt Hoffman2} cluster under PyPy~3.11. This explains a big jump down from $n=40$ to~$41$.
Since the case $n=45$ took over 20 hours, computing much beyond that point became infeasible.

\smallskip

{\footnotesize
\begin{table}[htbp]
\centering
\begin{tabular}{|c|c|c|r|}
\hline
\text{$n$}  & $p(n)$ & \text{$\rC(n)$}  &  \text{runtime} \. \\ \hline
25       & 1958         &   45         & 28 \text{sec.}             \\ \hline
30       & 5604       & 176          & 3:18 = 198 \text{sec.}              \\ \hline
35        & 14883       & 423           & 54:18 = 3258 \text{sec.}              \\ \hline
40        & 37338       & 1484          & 12:28:39 = 44919 \text{sec.}           \\ \hline\hline
41 & 44583 & 1768 & 1:33:29 = 5609 \text{sec.}  \\ \hline
45 & 89134 & 4323 & 20:02:52 = 72172 \text{sec.}  \\ \hline
\end{tabular}
\end{table}
}

\smallskip

Here \ts $p(n)=|\{\la \vdash n\}|$ \ts is the number of integer partitions of~$n$, shown
to give the first approximation of the size of the search space: to compute \ts $\rC(n,k)$ \ts
we need to search over \. $p(k)\ts p(n-k) \ts p(n)$ \. triples of partitions.  See
\cite[\href{https://oeis.org/A000041}{A000041}]{OEIS} for further values and references.

\smallskip

\section{Discussion}\label{s:exp-disc}
Let \ts $\rM(n):=|\{\la\vdash n \. : \. f^\la=\rD(n)\}$ \ts denote the number
of irreducible \ts $S_n$ \ts modules of the largest dimension.  It is wide open
whether \ts $\rM(n)$ \ts is bounded, see \cite[Remark~4.2]{PPY19-g}.  Note that
since there are only \ts $p(n)=e^{O(\sqrt{n})}$ \ts partitions of~$n$, while
the maximal dimension \ts $\rD(n)\approx\sqrt{n!}\ts$, one would
certainly expect few if any coincidences (except for conjugate partitions).

In fact, until relatively recently it was open whether \emph{any} \ts dimension, i.e.,
not necessarily maximal, can appear an unbounded number of times; this was proved
by Craven \cite{Cra08}.  In the opposite direction, the bounds in \cite{AE26}
leave open whether \ts $\rM(n)=e^{o(\sqrt{n})}$.

Compare this with the second part of Conjecture~\ref{conj:main}.  Again, asymptotically
there are only \ts $p(n)$ \ts partitions, while the largest LR coefficients is much
larger: \ts $\rC(n) \approx 2^{n/2}$, so one would not expect many coincidences.
Note also that computing LR coefficients efficiently is a major open problem,
see \cite[Conj.~5.14]{Pan24}, compared with \ts $f^\la$ \ts which has a nice
product formula.  Given this state of art, it is rather audacious to believe that
the second part of the conjecture can be resolved in a foreseeable future.

The numerical evidence is helpful at this point.  For example, we have a surprising
coincidence \ts
$c^\la_{\mu\mu}= c^{\la'}_{\mu\mu}=c^\ga_{\mu\mu}= c^{\ga'}_{\mu\mu}=\rC(20)$,
where \ts $\la=(6,5,3,3,2,1)$, \ts $\mu=(4,3,2,1)$ \ts and \ts $\ga=(6,4,4,3,2,1)$.
This shows that the largest LR coefficient is not unique up to transposition and
conjugation in this case.  In our computations, the last time this happens is
for \ts $n=27$, which is why we stated the second part of the conjecture for \ts $n\ge 28$.

\smallskip

For the first part of the conjecture, there is a bit more hope that asymptotic
methods can help, so it can be established without the second part.  We need
the following:

\smallskip

\begin{prop}\label{prop:Ck}
Suppose \ts $\rC(n)$ \ts is unique in the set of values \ts $\{\rC(n,k), 0 \le k \le n/2\}$.
Then the first part of Conjecture~\ref{conj:main} holds.
\end{prop}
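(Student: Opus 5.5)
The plan is to run the proof of Theorem~\ref{t:main} on an \emph{arbitrary} maximal triple, and to use the uniqueness hypothesis to show that each of the three steps there must leave the triple's shape unchanged. Concretely, let $k_0$ be the unique index with $\rC(n,k_0)=\rC(n)$. Take any triple with $c^\la_{\mu\nu}=\rC(n)$. We have $|\mu|\le n/2$ or $|\nu|\le n/2$, and $c^\la_{\mu\nu}=c^\la_{\nu\mu}$. So we may assume $|\mu|\le|\nu|$, and hence $|\mu|=k_0$ by uniqueness. The one principle used throughout is this: if an inequality \eqref{eq:LPP-LR} or \eqref{eq:Oko-LR} produces another maximal triple whose smaller partition has size $k'\ne k_0$ with $k'\le n/2$, then $\rC(n,k')=\rC(n)$, contradicting uniqueness.

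\emph{Step 1 (nesting).} By \eqref{eq:LPP-LR} we get $c^\la_{\mu\cap\nu,\mu\cup\nu}=\rC(n)$. If $\mu\not\subseteq\nu$, then $\mu\cap\nu$ is a proper subdiagram of $\mu$. Hence $k':=|\mu\cap\nu|<|\mu|=k_0\le n/2$, which contradicts uniqueness. So $\mu\subseteq\nu$.

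\emph{Step 2 (at most one box per row).} Apply \eqref{eq:Oko-LR} to get $c^\la_{\ga\de}=\rC(n)$, where $\ga=\lfloor(\mu+\nu)/2\rfloor$ and $\de=\lceil(\mu+\nu)/2\rceil$. Then $|\ga|\le|\de|$, so $|\ga|\le n/2$. Moreover,
$$
|\ga|-|\mu| \, = \, \sum_i \left\lfloor \frac{\nu_i-\mu_i}{2}\right\rfloor \, \ge \, 0 ,
$$
with equality if and only if $\nu_i-\mu_i\in\{0,1\}$ for all~$i$. Uniqueness forces $|\ga|=k_0=|\mu|$. Thus every row of $\nu/\mu$ contains at most one box.

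\emph{Step 3 (conjugation).} By conjugation symmetry, $c^{\la'}_{\mu'\nu'}=\rC(n)$ is also maximal, with $\mu'\subseteq\nu'$ and $|\mu'|=k_0$. Repeating Step~2 for this triple shows that $\nu'/\mu'$ has at most one box in each row. Equivalently, $\nu/\mu$ has at most one box in each column.

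It remains to check that a skew shape with at most one box in every row and every column has no two edge-adjacent cells. Such a shape is therefore a disjoint union of $1\times 1$ squares, which is the first part of Conjecture~\ref{conj:main}. The only delicate point is Step~2: one must verify that the new smaller size $|\ga|$ still satisfies $|\ga|\le n/2$, so that it is a legitimate competing index in the definition of $\rC(n,k)$. This holds because $|\ga|\le|\de|$ and $|\ga|+|\de|=n$. Two degenerate cases need a remark. If $|\mu|=|\nu|=n/2$, nesting forces $\mu=\nu$, and the statement holds trivially. The case $k_0=0$ is also trivial.
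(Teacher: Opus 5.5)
Your proof is correct and takes the same route as the paper. The paper's proof carries out only your Step~1 (LPP plus uniqueness of the maximizing index forces $\mu\subseteq\nu$) and then says the same argument applies to the two averaging operations from the proof of Theorem~\ref{t:main}, omitting the details; your Steps~2 and~3 supply exactly those details, namely the bound $|\gamma|\le n/2$, the identity $|\gamma|-|\mu|=\sum_i\lfloor(\nu_i-\mu_i)/2\rfloor$, and the treatment of the second averaging via the conjugate triple $(\lambda',\mu',\nu')$.
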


\begin{proof}
In notation of the proof of Theorem~\ref{t:main}, suppose the nested property
fails for the maximal LR coefficient \ts $c^\la_{\mu\nu}=\rC(n)$,
i.e.\ suppose $\mu \not \subseteq \nu$.  Then \ts $c^\la_{\al\be}=\rC(n)$ \ts
is also maximal, and since \ts $\mu\subset \al$, \ts $\mu \ne \al$,
we obtain a contradiction with the assumption:  $\rC(n,|\mu|)=\rC(n,|\al|)$.
Use the same argument
for the two averaging operations used in the proof; we omit the details.   \end{proof}

\smallskip

Now, observe that we have only \ts $n/2$ \ts numbers \ts $\{\rC(n,k), 0\le k \le n/2\}$ \ts
distributed between~$1$ \ts and \ts $\rC(n) \approx 2^{n/2}$.  Heuristically, one
would expect a runaway behavior of \ts $\rC(n)$.  Thus, we speculate that the assumption
of the proposition could in principle be proved asymptotically.

Of course, whether the table in the Appendix supports this
approach is a subject for interpretation.  For example, \ts $\rC(45) = \rC(45,22)=4323$ \ts
is much larger than the second largest value \ts $\rC(45,21)=4044$.  On the other hand,
we note that \ts $\rC(42)=\rC(42,18)=2074$, which is only a little larger than \ts
$\rC(42,21)=2064$.

\vskip.7cm
{\small
\subsection*{Acknowledgements}
During the preparation of this manuscript, the authors used
Claude (Anthropic, Claude Opus~4.8) to help with the programming.
We thank the UCLA Advanced Research Computing team for maintaining
the Hoffman2 High-Performance Compute Cluster.  The first author
was partially supported by the NSF grant CCF-2302173.
}

\vskip.8cm

\begin{landscape}

\appendix

\section{Table of the largest Littlewood--Richardson coefficients}\label{s:app}

\vskip.4cm

{\footnotesize
\begin{center}
\begin{tabular}{|c||c|c|c|c|c|c|c|c|c|c|c|c|c|c|c|c|c|c|c|c|c|c|}
\hline
$n$      &  24 &  25 &  26 &  27 &  28 &  29 &  30 &  31 &  32 &  33 &  34 &  35 &  36 &  37 &   38 &   39 &   40 &   41 &   42 &   43 &   44 &   45 \\ \hline
$\rC(n)$ &  41 &  45 &  50 &  59 &  93 & 112 & 176 & 196 & 270 & 298 & 408 & 423 & 640 & 840 & 1096 & 1216 & 1484 & 1768 & 2074 & 2624 & 3450 & 4323 \\ \hline
\end{tabular}
\end{center}
}

\smallskip

\vskip1cm

\begin{center}
Table of \ts $\rC(n,k)$, for \ts $0\le k \le n/2$ \ts and \ts $24 \le n \le 45$.
Green cells are maximal values \ts $\rC(n)$.

\bigskip

\scalebox{1.45}{%
\setlength{\tabcolsep}{2.0pt}%
\scriptsize
\begin{tabular}{|c||c|c|c|c|c|c|c|c|c|c|c|c|c|c|c|c|c|c|c|c|c|c|c|}
\hline
$n\ts\backslash\ts k$ & $0$ & $1$ & $2$ & $3$ & $4$ & $5$ & $6$ & $7$ & $8$ & $9$ & $10$ & $11$ & $12$ & $13$ & $14$ & $15$ & $16$ & $17$ & $18$ & $19$ & $20$ & $21$ & $22$ \\ \hline\hline
$24$ & 1 & 1 & 1 & 2 & 3 & 6 & 16 & 20 & 24 & 26 & 31 & \cellcolor{green!25}41 & 38 &  &  &  &  &  &  &  &  &  &  \\ \hline
$25$ & 1 & 1 & 1 & 2 & 3 & 6 & 16 & 20 & 24 & 29 & 40 & \cellcolor{green!25}45 & 44 &  &  &  &  &  &  &  &  &  &  \\ \hline
$26$ & 1 & 1 & 1 & 2 & 3 & 6 & 16 & 20 & 24 & 29 & 44 & 47 & 48 & \cellcolor{green!25}50 &  &  &  &  &  &  &  &  &  \\ \hline
$27$ & 1 & 1 & 1 & 2 & 3 & 6 & 16 & 20 & 24 & 36 & 48 & \cellcolor{green!25}59 & 58 & 58 &  &  &  &  &  &  &  &  &  \\ \hline
$28$ & 1 & 1 & 1 & 2 & 3 & 6 & 16 & 35 & 30 & 36 & 54 & 64 & 72 & \cellcolor{green!25}93 & 68 &  &  &  &  &  &  &  &  \\ \hline
$29$ & 1 & 1 & 1 & 2 & 3 & 6 & 16 & 35 & 45 & 38 & 60 & 76 & 88 & 111 & \cellcolor{green!25}112 &  &  &  &  &  &  &  &  \\ \hline
$30$ & 1 & 1 & 1 & 2 & 3 & 6 & 16 & 35 & 45 & 55 & 68 & 83 & 110 & 133 & 132 & \cellcolor{green!25}176 &  &  &  &  &  &  &  \\ \hline
$31$ & 1 & 1 & 1 & 2 & 3 & 6 & 16 & 35 & 45 & 60 & 88 & 96 & 112 & 139 & 149 & \cellcolor{green!25}196 &  &  &  &  &  &  &  \\ \hline
$32$ & 1 & 1 & 1 & 2 & 3 & 6 & 16 & 35 & 45 & 60 & 96 & 109 & 134 & 161 & 173 & 218 & \cellcolor{green!25}270 &  &  &  &  &  &  \\ \hline
$33$ & 1 & 1 & 1 & 2 & 3 & 6 & 16 & 35 & 45 & 60 & 96 & 137 & 142 & 168 & 195 & 228 & \cellcolor{green!25}298 &  &  &  &  &  &  \\ \hline
$34$ & 1 & 1 & 1 & 2 & 3 & 6 & 16 & 35 & 45 & 60 & 96 & 146 & 173 & 191 & 203 & 254 & 329 & \cellcolor{green!25}408 &  &  &  &  &  \\ \hline
$35$ & 1 & 1 & 1 & 2 & 3 & 6 & 16 & 35 & 45 & 60 & 96 & 165 & 210 & 213 & 254 & 276 & 377 & \cellcolor{green!25}423 &  &  &  &  &  \\ \hline
$36$ & 1 & 1 & 1 & 2 & 3 & 6 & 16 & 35 & 90 & 75 & 120 & 165 & 210 & 253 & 322 & \cellcolor{green!25}640 & 424 & 462 & 467 &  &  &  &  \\ \hline
$37$ & 1 & 1 & 1 & 2 & 3 & 6 & 16 & 35 & 90 & 111 & 136 & 179 & 249 & 266 & 378 & 744 & \cellcolor{green!25}840 & 566 & 571 &  &  &  &  \\ \hline
$38$ & 1 & 1 & 1 & 2 & 3 & 6 & 16 & 35 & 90 & 111 & 192 & 199 & 268 & 317 & 444 & 864 & 960 & \cellcolor{green!25}1096 & 716 & 693 &  &  &  \\ \hline
$39$ & 1 & 1 & 1 & 2 & 3 & 6 & 16 & 35 & 90 & 111 & 192 & 255 & 277 & 333 & 456 & 882 & 1080 & \cellcolor{green!25}1216 & 1168 & 795 &  &  &  \\ \hline
$40$ & 1 & 1 & 1 & 2 & 3 & 6 & 16 & 35 & 90 & 111 & 192 & 311 & 346 & 387 & 522 & 1002 & 1208 & 1352 & \cellcolor{green!25}1484 & 1424 & 1038 &  &  \\ \hline
$41$ & 1 & 1 & 1 & 2 & 3 & 6 & 16 & 35 & 90 & 111 & 192 & 311 & 414 & 423 & 540 & 1022 & 1328 & 1495 & 1636 & \cellcolor{green!25}1768 & 1536 &  &  \\ \hline
$42$ & 1 & 1 & 1 & 2 & 3 & 6 & 16 & 35 & 90 & 111 & 192 & 347 & 498 & 510 & 618 & 1160 & 1380 & 1666 & \cellcolor{green!25}2074 & 2045 & 1922 & 2064 &  \\ \hline
$43$ & 1 & 1 & 1 & 2 & 3 & 6 & 16 & 35 & 90 & 111 & 192 & 347 & 498 & 600 & 700 & 1186 & 1518 & 1806 & 2332 & \cellcolor{green!25}2624 & 2381 & 2614 &  \\ \hline
$44$ & 1 & 1 & 1 & 2 & 3 & 6 & 16 & 35 & 90 & 111 & 192 & 347 & 498 & 610 & 824 & 1344 & 1664 & 1974 & 2468 & 2882 & 2919 & 3280 & \cellcolor{green!25}3450 \\ \hline
$45$ & 1 & 1 & 1 & 2 & 3 & 6 & 16 & 35 & 90 & 216 & 272 & 347 & 498 & 711 & 872 & 1552 & 2096 & 3250 & 2620 & 3149 & 3490 & 4044 & \cellcolor{green!25}4323 \\ \hline
\end{tabular}
}
\end{center}

\end{landscape}

\end{document}